\newcommand{\ndN}{\mathbb{N}}
\newcommand{\ndR}{\mathbb{R}}
\renewcommand{\Pr}[1]{\mathbb{P}(#1)}
\newcommand{\Prb}[1]{\mathbb{P}\left( #1 \right)}
\newcommand{\Ex}[1]{\mathbb{E}[#1]}
\newcommand{\Exb}[1]{\mathbb{E}\left[ #1 \right]}
\newcommand{\cF}{\mathcal{F}}
\newcommand{\cG}{\mathcal{G}}
\newcommand{\cH}{\mathcal{H}}
\newcommand{\mF}{\mathsf{F}}
\newcommand{\mG}{\mathsf{G}}
\newcommand{\mH}{\mathsf{H}}
\newcommand{\mS}{\mathsf{S}}
\newcommand{\mR}{\mathsf{R}}
\newcommand{\Sym}{\text{Sym}}
\newcommand{\cE}{\mathcal{E}}
\newcommand{\eqdist}{\,{\buildrel d \over =}\,}
\newcommand{\Set}{\textsc{SET}}
\newtheorem{theorem}{Theorem}[section]
\newtheorem{corollary}[theorem]{Corollary}
\newtheorem{lemma}[theorem]{Lemma}
\newtheorem{definition}[theorem]{Definition}
\numberwithin{equation}{section}
\title{\textbf{Unlabelled Gibbs partitions}}
\date{}
\author{Benedikt Stufler\thanks{\'Ecole Normale Sup\'erieure de Lyon, E-mail: benedikt.stufler@ens-lyon.fr; The author is supported by the German Research Foundation DFG, STU 679/1-1}}
\begin{document}
	
	\maketitle
	
\let\thefootnote\relax\footnotetext{ \\\emph{MSC2010 subject classifications}. Primary  60C05, 05A18; secondary 60B10. \\
\emph{Keywords and phrases.} Gibbs partitions, random partitions of sets, unlabelled structures}

\vspace {-0.5cm}

\begin{abstract}
We study random composite structures considered up to symmetry that are sampled according to weights on the inner and outer structures.  This model may be viewed as an unlabelled version of Gibbs partitions and encompasses multisets of weighted combinatorial objects. We describe a general setting characterized by the formation of a giant component. The collection of small fragments is shown to converge in total variation toward a limit object following a P\'olya-Boltzmann distribution.
\end{abstract}

\section{Introduction}
The study of the evolution of shapes of random ensembles, as the total size becomes large, has a long history, and connections to a variety of  fields sucht statistical mechanics, representation theory, and combinatorics are known. A sketch of the history of limit shapes may be found in the work by Erlihson and Granovsky~\cite{MR2453776} on Gibbs partitions in the expansive case, and we refer the reader to this informative summary and references given therein for an adequate treatment of the historical development.

The term Gibbs partitions was coined by Pitman~\cite{MR2245368} in his comprehensive survey on combinatorial stochastic processes. It describes a model of random partitions of sets, where the collection of classes as well as each individual partition class are endowed with a weighted structure.  For example, in simply generated random plane forest, each component is endowed with a tree structure carrying a non-negative weight, and the collection of components carries a linear order. Likewise, Gibbs partitions also encompass various of types of random graphs whose vertex sets are partitioned by their connected components.

Many structures such as classes of graphs may also be viewed up to symmetry. The symmetric group acts in a canonical way on the collection of composite structures over a fixed set, and its orbits are called unlabelled objects. Sampling such an isomorphism class with probability proportional to its weight is the natural unlabelled version of the Gibbs partition model. This encompasses as a special case the important model of random multisets, which has been studied by Bell, Bender, Cameron and Richmond~\cite{MR1763972}, and which is also encompassed in the setting by Arratia, Barbour and Tavar{\'e}~\cite{MR2032426} and Barbour and Granovsky~\cite{MR2121024}. The important example of forests of unlabelled trees has been considered by Mutafchiev~\cite{MR1662783}. General unlabelled Gibbs partitions, however, appear to have not received any attention in the literature so far. This is possibly due to the fact that this model of random ensembles is quite involved, as the symmetries of both the inner and outer structures influence its behaviour. This makes it particularly hard to arrive at general results that characterize the asymptotic behaviour for a wide range of species of structures. Nevertheless, it is natural to consider combinatorial objects up symmetry, and to ask, whether similar regimes such as for example the expansive case \cite{MR2453776} or the convergent case \cite{Mreplaceme} may also be found in the unlabelled setting.

For this reason, the present work aims make a first step in this direction, with the hope that this may incite further research. We study a general setting characterized by the formation of a giant component with a stochastically bounded remainder.  This phenomenon may for example be observed for uniformly sampled unordered forests of unlabelled trees as the total number of vertices tends to infinity, regardless whether we consider trees that are rooted or unrooted, ordered or unordered. The small fragments are shown to converge in total variation towards a limit object following a Pólya-Boltzmann distribution, a term coined by Bodirsky, Fusy, Kang and Vigerske~\cite{MR2810913}, who generalized and further developed the theory of Boltzmann samplers initiated in \cite{MR2095975,MR2498128}. 
Rather than taking a pure generating function viewpoint, our approach is to use the methods from \cite{MR2810913} to reduce each problem to probabilistic questions. This allows us to prove our results in great generality and economically make use of available results for heavy-tailed and subexponential probability distributions~\cite{MR3097424, MR772907,MR714482, MR0348393}.

The present work is also the logical continuation of \cite{Mreplaceme}, where a gelation phenomenon was observed for labelled Gibbs partitions. The P\'olya-Boltzmann sampler framework of \cite{MR2810913} allows us to pursue a similar overall strategy as in \cite{Mreplaceme}, but our proofs are more involved and technical, as we have to consider objects up to symmetry.

 The motivation of this particular  line of research stems from the study of random graphs from restricted classes. McDiarmid~\cite{MR2418771, MR2507738} showed that the small fragments of a random graph from a minor-closed addable class converge toward a Boltzmann Poisson random graph. In this work, McDiarmid poses the question, if a similar behaviour may be observed for unlabelled graphs.  As was shown in \cite{Mreplaceme}, an approach via Gibbs partitions and conditioned Galton--Watson trees is possible in the labelled setting. Hence it is natural to ask, whether a similar strategy also works in the unlabelled setting. The present work provides a first piece to the puzzle, and we hope to pursue this question further in future work.

\subsection*{Plan of the paper}
In Section~\ref{sec:prel} we fix notations and recall some background related to Gibbs partitions, combinatorial species, P\'olya-Boltzmann distributions and subexponential sequences. Section~\ref{sec:conv} presents our main results for unlabelled Gibbs partitions. In Section~\ref{sec:proofs} we collect all proofs.

\section{Preliminaries}
\label{sec:prel}

\subsection{Notation}
We use the notation
\[
\ndN=\{1,2,\ldots\}, \qquad \ndN_0 = \{0\} \cup \ndN, \qquad [n]=\{1,2,\ldots, n\}, \qquad n \in \ndN_0,
\]
and let $\ndR_{>0}$ and $\ndR_{\ge 0}$ denote the sets of positive and non-negative real numbers, respectively. 
Throughout, we assume that all considered random variables are defined on a common probability space $(\Omega, \mathscr{F}, \mathbb{P})$. 
All unspecified limits are taken as $n$ becomes large, possibly along an infinite subset of $\ndN$.

A function $h: \ndR_{>0} \to \ndR_{>0}$ is termed {\em slowly varying}, if for any fixed $t >0$ it holds that
\[
\lim_{x \to \infty}\frac{h(tx)}{h(x)} = 1.
\]
For any power series $f(z) = \sum_n f_n z^n$, we let $[z^n]f(z) = f_n$ denote the coefficient of $z^n$. A sequence of $\ndR$-valued random variables $(X_n)_{n \ge 1}$ is {\em stochastically bounded}, if for each $\epsilon > 0$ there is a constant $M>0$ with
\[
\limsup_{n \to \infty} \Pr{ |X_n| \ge M} \le \epsilon.
\]
The {\em total variation distance} between two random variables $X$ and $Y$ with values in a countable state space $S$ is defined by
\[
d_{\textsc{TV}}(X,Y) = \sup_{\cE \subset S} |\Pr{X \in \cE} - \Pr{Y \in \cE}|.
\]

\subsection{Weighted combinatorial species and cycle index sums}
The present section recalls the necessary species-theory following Joyal~\cite{MR633783}. A {\em species of combinatorial structures} $\cF^\omega$  with non-negative weights is a functor that produces for each finite set $U$ a finite set $\cF[U]$ of {\em $\cF$-structures} and a map
\[
\omega_U: \cF[U] \to \ndR_{\ge 0}.
\]
We will often write $\omega(F)$ instead of $\omega_U(F)$ for the weight of a structure $F \in \cF[U]$. If no weighting is specified explicitly, we assume that any structure receives weight $1$. We refer to the set $U$ as the set of {\em labels} or {\em atoms} of the structure. For any $\cF$-object $F \in \cF[U]$ we let
$
|F| := |U| \in \ndN_0
$
denote its {\em size}. The species $\cF$ is further required to produce for each bijection $\sigma: U \to V$ a corresponding bijection 
\[
\cF[\sigma]: \cF[U] \to \cF[V]
\]
that preserves the $\omega$-weights. In other words, the following diagram must commute.
\[
\xymatrix{ \cF[U]  \ar[r]^{\cF[\sigma]} \ar[dr]^{\omega_U} 
	&\cF[V]\ar[d]^{\omega_V}\\
 		    &\ndR_{\ge 0}}
\]
Species are also subject to the usual functoriality requirements: the identity map $\text{id}_U$ on $U$ gets mapped to the identity map $\cF[\text{id}_U] = \text{id}_{\cF[U]}$ on the set $\cF[U]$. For any bijections $\sigma: U \to V$ and $\tau: V \to W$ the diagram
\[
\xymatrix{ \cF[U]  \ar[r]^{\cF[\sigma]} \ar[dr]^{\cF[\tau \sigma]} 
	&\cF[V]\ar[d]^{\cF[\tau]}\\
	&\cF[W]}
\]
commutes. We further assume that $\cF[U] \cap \cF[V] = \emptyset$ whenever $U \ne V$. This is not much of a restriction, as we may always replace $\cF[U]$ by $\{U\} \times \cF[U]$ for all sets $U$, to make sure that it is satisfied.

Two weighted species $\cF^\omega$ and $\cH^\gamma$ are said to be {\em structurally equivalent} or {\em isomorphic}, denoted by $\cF^\omega \simeq \cH^\gamma$, if there is a family of weight-preserving bijections $(\alpha_U: \cF[U] \to \cH[U])_U$ with $U$ ranging over all finite sets, such the following diagram commutes for each  bijection  $\sigma: U \to V$ of finite sets.
\[
\xymatrix{ \cF[U] \ar[d]^{\alpha_U} \ar[r]^{\cF[\sigma]} &\cF[V]\ar[d]^{\alpha_V}\\
	\cH[U] \ar[r]^{\cG[\sigma]} 		    &\cH[V]}
\]

For any finite set $U$, the symmetric group $\mathscr{S}_U$ acts on the set $U$ via
\[
	\sigma.F = \cF[\sigma](F)
\]
for all $F \in \cF[U]$ and $\sigma \in \mathscr{S}_U$. A bijection $\sigma$ with $\sigma.F = F$ is termed an {\em automorphism} of $F$. We let $\tilde{\cF}[U]$ denote the orbits of this group action. All $\cF$-objects of an orbit $\tilde{F}$ have the same size and same $\omega$-weight, which we denote by $|\tilde{F}|$ and $\omega(\tilde{F})$. It will be convenient to use the notation
\[
\mathscr{U}(\cF) = \bigcup_{k \ge 0} \mathscr{U}_k (\cF) \qquad \text{with} \qquad \mathscr{U}_k (\cF) = \tilde{\cF}[k].
\]
Formally, an {\em unlabelled} $\cF$-object is defined as an isomorphism class of $\cF$-objects. We may also identify the unlabelled objects of a given size $n$ with the orbits of the action of the symmetric group on any $n$-sized set. In particular, the collection of unlabelled $\cF$-objects may be identified with the set $\mathscr{U}(\cF)$. By abuse of notation, we treat unlabelled objects as if they were regular $\cF$-objects. The power series
\[
	\tilde{\cF}^\omega(z) = \sum_{\tilde{F} \in \mathscr{U}(\cF)} \omega(\tilde{F}) z^{|\tilde{F}|}.
\]
is the {\em ordinary generating series} of the species.

To any species $\cF$ we may assign the corresponding species $\Sym(\cF)$ of {\em $\cF$-symmetries} such that
\[
	\Sym(\cF)[U] = \{ (F, \sigma) \mid F \in \cF[U], \sigma \in \mathscr{U}, \sigma.F = F\}.
\]
In other words, a symmetry is a pair of an $\cF$-object and an automorphism. 
The transport along a bijection $\gamma: U \to V$ is given by
\[
	\Sym(\cF)[\gamma](F, \sigma) = (\cF[\gamma](F), \gamma \sigma \gamma^{-1}).
\]
For any permutation $\sigma$ we let $\sigma_i$ denote its number of $i$-cycles. The {\em cycle index series} of a species $\cF$ is defined as the formal power series
\[
	Z_{\cF^\omega}(z_1, z_2, \ldots) = \sum_{k \ge 0} \sum_{(F, \sigma) \in \Sym(\cF)[k]} \frac{\omega(F)}{k!} z_1^{\sigma_1} \cdots z_k^{\sigma_k}
\]
in countably infinitely many indeterminates $z_1, z_2, \ldots$. The following standard result is given for example by Bergeron, Labelle and Leroux \cite[Ch. 2.3]{MR1629341} and shows how the ordinary generating series and the cycle index sum of a species are related.
\begin{lemma}
	\label{le:relation}
For any finite set $U$ and any unlabelled $\cF$-object $\tilde{F} \in \tilde{\cF}[U]$ there are precisely $|U|!$ many symmetries $(F, \sigma) \in \Sym(\cF)[U]$ such that $F$ belongs to the orbit $\tilde{F}$. Consequently:
\[
	\tilde{\cF}^\omega(z) = Z_{\cF^\omega}(z, z^2, z^3, \ldots).
\]
\end{lemma}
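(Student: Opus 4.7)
The first assertion is exactly the orbit-stabilizer theorem applied to the natural action of $\mathscr{S}_U$ on $\cF[U]$. My plan is to fix a representative $F_0 \in \tilde{F}$ and observe that the pairs $(F,\sigma) \in \Sym(\cF)[U]$ with $F \in \tilde{F}$ decompose as $F$ ranges over the orbit $\tilde{F}$ and, for each such $F$, $\sigma$ ranges over the stabilizer $\Aut(F) = \{\sigma \in \mathscr{S}_U : \sigma.F = F\}$. Since all stabilizers in a single orbit are conjugate in $\mathscr{S}_U$, they have the same cardinality $|\Aut(F_0)|$, and orbit-stabilizer gives $|\tilde{F}| \cdot |\Aut(F_0)| = |\mathscr{S}_U| = |U|!$. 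Summing over $F \in \tilde{F}$ therefore yields exactly $|U|!$ symmetries, as claimed.

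For the consequence, the plan is to substitute $z_i = z^i$ in the definition of the cycle index series and to exploit the fact that every $\sigma \in \mathscr{S}_k$ satisfies $\sum_{i \ge 1} i \sigma_i = k$. This transforms the defining sum into
\[
Z_{\cF^\omega}(z, z^2, z^3, \ldots) = \sum_{k \ge 0} \frac{z^k}{k!} \sum_{(F, \sigma) \in \Sym(\cF)[k]} \omega(F).
\]
Next I would group the inner sum according to the orbit $\tilde{F}$ of $F$. Since $\omega$ is constant on orbits and the first part of the lemma counts exactly $k!$ symmetries whose first coordinate lies in a given orbit $\tilde{F} \in \tilde{\cF}[k]$, the inner sum simplifies to $k! \sum_{\tilde{F} \in \tilde{\cF}[k]} \omega(\tilde{F})$. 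Cancelling the factor $k!$ and recalling the definition of $\tilde{\cF}^\omega(z)$ gives the stated identity.

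Both parts are essentially formal; the only potential pitfall is keeping the bookkeeping between orbits, representatives, and weights consistent, so that the cancellation $|\tilde{F}| \cdot |\Aut(F_0)| = |U|!$ can be applied cleanly when passing from the labelled to the unlabelled sum. Once the orbit-stabilizer identity is stated carefully, the generating-series identity is an immediate computation.
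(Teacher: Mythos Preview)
Your argument is correct. The paper does not actually prove this lemma; it is stated there as a standard result with a reference to Bergeron, Labelle and Leroux \cite[Ch.~2.3]{MR1629341}, so there is no ``paper's own proof'' to compare against. Your proof via the orbit--stabilizer theorem for the first part, followed by the substitution $z_i = z^i$ together with $\sum_i i\sigma_i = k$ and grouping by orbits for the second part, is precisely the standard derivation one finds in the cited reference.
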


\subsection{Constructions on species}

There are many ways to form species of structures by combining other species. Most prominently, composite structures are formed by partitioning a set and endowing both the partition classes and the collection of all classes with additional weighted structures. Derived structures are regular structures over a set of labels together with a distinguished $*$-placeholder that does not count as regular atom. We recall the details following classical literature by Joyal \cite{MR633783} and Bergeron, Labelle and Leroux \cite{MR1629341}.

\subsubsection{Composite structures}

 Let $\cF^\omega$ and $\cG^\nu$ be combinatorial species with non-negative weights. We assume that $\cG^\nu[\emptyset] = \emptyset$. The {\em composition} $\cF^\omega \circ \cG^\nu = (\cF \circ \cG)^\mu$ is a weighted species that describes partitions of finite sets, where each partition class is endowed with a $\cG$-structure, and the collection of partition classes carries an $\cF$-structure. That is, for each finite set $U$
\[
	(\cF \circ \cG)[U] = \bigcup_{\pi}  \cF[\pi] \times \prod_{Q \in \pi} \cG[Q]
\]
with the index $\pi$ ranging over all unordered partitions of $U$ with non-empty partition classes. In other words, $\pi$ is a set of non-empty subsets of $U$ such that $U = \bigcup_{Q \in \pi} Q$ and  $Q \cap Q' = \emptyset$ for all $Q, Q' \in \pi$ with $Q \ne Q'$. The weight of a composite structure $(F, (G_Q)_{Q \in \pi})$ is given by 
\[
	\mu(F, (G_Q)_{Q \in \pi}) = \omega(F) \prod_{Q \in \pi} \nu(G_Q).
\]
For any bijection $\sigma: U \to V$, the corresponding transport function 
\[
(\cF \circ \cG)[\sigma]: (\cF \circ \cG)[U] \to (\cF \circ \cG)[V]
\]
is given as follows. For each element $(F, (G_Q)_{Q \in \pi}) \in (\cF \circ \cG)[U]$ we let $\bar{\pi} = \{ \sigma(Q) \mid Q \in \pi\}$ denote the corresponding partition of $V$ and set
\[
\bar{\sigma}: \pi \to \bar{\pi}, Q \mapsto \pi(Q).
\]
For each $Q \in \pi$ we let 
\[
\sigma|_Q: Q \to \sigma(Q), x \mapsto \sigma(x) \]
 denote the restriction of $\sigma$ to the class $Q$. We set
\[
(\cF \circ \cG)[\sigma](F, (G_Q)_{Q \in \pi}) = (\cF[\bar{\sigma}](F), ( \cG[ \sigma|_Q](G_{\sigma^{-1}(P)}))_{ P \in \bar{\pi}}).
\]
The cycle index sum of the composition is given by 
\[
	Z_{\cF^\omega \circ \cG^\nu}(z_1, z_2, \ldots) = Z_{\cF^\omega}(Z_{\cG^\nu}(z_1, z_2, \ldots), Z_{\cG^{\nu^2}}(z_2, z_4, \ldots), Z_{\cG^{\nu^3}}(z_3, z_6, \ldots), \ldots ).
\]
Here we let $\nu^i$ denote the weighting that assigns to each $\cG$-object $G$ the weight $\nu(G)^i$.

For example, the species $\Set$ given by $\Set[U] = \{U\}$ for all $U$ has cycle index sum given by
\[
Z_{\Set}(z_1, z_2, \ldots) = \exp \left (\sum_{i=1}^\infty \frac{z_i}{i} \right ).
\]
So, given a weighted species $\cG^\nu$, the generating series for multisets of unlabelled $\cG$-objects is given by
\[
\exp\left(\sum_{i=1}^\infty \frac{\tilde{\cG}^{\nu^i}(z^i)}{i} \right ).
\]

\subsubsection{Derived structures}

Let $\cF^\omega$ be a weighted species. The {\em derived} species $(\cF')^\omega$ is defined as follows. For each set $U$ we let $*_U$ denote a placeholder object not contained in $U$. For example, we could set $*_U := U$, as no set is allowed to be an element of itself. By abuse of notation, we will usually drop the index and just refer to it as the $*$-placeholder atom. We set
\[
	\cF'[U] = \cF[U \cup \{ *_U\}].
\]

The weight of an element $F' \in \cF'[U]$ is its $\omega$-weight as an $\cF$-structure. Any bijection $\sigma: U \to V$ may canonically be extended to a bijection
\[
	\sigma': U \cup \{ *_U\} \to V \cup \{ *_V\},
\]
and we set
\[
	\cF'[\sigma] = \cF[\sigma'].
\]
Thus, an $\cF'$-object with size $n$ is an $\cF$-object with size $n+1$, since we do not count the $*$-placeholder. The cycle index sum of $(\cF')^\omega$ is given by the formal derivative
\[
	Z_{(\cF')^\omega}(z_1, z_2, \ldots) = \frac{\text{d}}{\text{d}{z_1}} Z_{\cF^\omega}(z_1, z_2, \ldots).
\]

\subsection{P\'olya-Boltzmann distributions for composite structures}
\label{sec:boltzmann}
Given a weighted species $\cF^\omega$ and a parameter $y>0$ with $0 < \tilde{\cF}^\omega(y) < \infty$, we may consider the corresponding Boltzmann probability measure
\[
	\mathbb{P}_{\tilde{\cF}^\omega, y}(\tilde{F}) = \tilde{\cF}^\omega(y)^{-1} y^{|\tilde{F}|}\omega(\tilde{F}), \qquad \tilde{F} \in \mathscr{U}(\cF).
\]
Likewise, given parameters $y_1, y_2, \ldots \ge 0$ with 
\[
	0 < Z_{\cF^\omega}(y_1, y_2, \ldots) < \infty,
\]
we may consider the P\'olya-Boltzmann distribution
\[
	\mathbb{P}_{Z_{\cF^\omega}, (y_j)_j}(F, \sigma) = Z_{\cF^\omega}(y_1, y_2, \ldots)^{-1} \frac{\omega(F)}{k!} y_1^{\sigma_1} \cdots y_k^{\sigma_k}
\]
for
\[
 (F, \sigma) \in \bigcup_{k \ge 0} \Sym(\cF)[k].
\]
Note that if we condition a $\mathbb{P}_{\tilde{\cF}^\omega, y}$-distributed random variable on having a fixed size $n$, then the result gets drawn from $\mathscr{U}_n(\cF)$ with probability proportional to its $\omega$-weight. In a way, this is analogous to the fact that simply generated trees (with analytic weights) may be viewed as Galton--Watson trees conditioned on having a fixed number of vertices, and the viewpoint is equally useful in this context.

Lemma~\ref{le:relation} implies the useful fact, that the orbit of the $\cF$-object of a $\mathbb{P}_{Z_{\cF^\omega}, (y, y^2, \ldots)}$-distributed symmetry follows a $\mathbb{P}_{\tilde{\cF}^\omega, y}$-distribution. This provides a systematic way for sampling Boltzmann distributed structures, as the cycle index sums for constructions on species admit explicit expressions with concrete combinatorial interpretations. In particular for composite structures, the following result is given in Bodirsky, Fusy, Kang and Vigerske {\cite[{Prop. 25}]{MR2810913}} for species without weights, and the generalization to the weighted setting is straight-forward.

\begin{lemma}
	\label{le:composition}
	\label{le:sampler}
	Let $\cF^\omega$ and $\cG^\nu$ be weighted species with $\cG[\emptyset] = \emptyset$. Let $y>0$ be a parameter with \[
	\widetilde{\cF^\omega \circ \cG^\nu}(y) =  Z_{\cF^\omega}(\cG^{\nu}(y), \cG^{\nu^2}(y^2), \cG^{\nu^3}(y^3), \ldots) \in ]0, \infty[. \]
	Then the following procedure terminates with an unlabelled $(\cF^\omega \circ \cG^\nu)$-object that follows a $\mathbb{P}_{\widetilde{\cF^\omega \circ \cG^\nu}, y}$-distribution.
	\begin{enumerate}
		\item Let $(\mF, \sigma)$ be a $\mathbb{P}_{Z_{\cF^\omega}, (\tilde{\cG}^\nu(y),\tilde{\cG}^{\nu^2}(y^2), \ldots) }$-distributed $\cF$-symmetry.
		\item For each cycle $\tau$ of $\sigma$ let $|\tau|$ denote its length and draw a $\cG$-object $G_\tau$ according to a $\mathbb{P}_{\tilde{\cG}^{\nu^{|\tau|}}, y^{|\tau|}}$-distribution.
		\item Construct an $\cF \circ \cG$-object by assigning for each cycle $\tau$  and each atom $v$ of $\tau$ an identical copy of $G_\tau$ to $v$.
	\end{enumerate}
\end{lemma}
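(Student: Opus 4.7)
The strategy is to lift the sampler so that it produces not merely an unlabelled object but a full $(\cF \circ \cG)^\mu$-symmetry, verify that this symmetry is distributed according to $\mathbb{P}_{Z_{(\cF \circ \cG)^\mu}, (y, y^2, \ldots)}$, and then conclude from the consequence of Lemma~\ref{le:relation} recalled at the start of Section~\ref{sec:boltzmann} that the orbit of such a symmetry follows $\mathbb{P}_{\widetilde{(\cF \circ \cG)^\mu}, y}$.

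To that end, I would first refine Step~2 so that for each cycle $\tau$ of $\sigma$ of length $i := |\tau|$ it draws a full $\cG$-symmetry $(H_\tau, \pi_\tau)$ from the law $\mathbb{P}_{Z_{\cG^{\nu^i}}, (y^i, y^{2i}, \ldots)}$; by the same consequence of Lemma~\ref{le:relation}, the orbit of $H_\tau$ is still $\mathbb{P}_{\tilde{\cG}^{\nu^i}, y^i}$-distributed, so the distribution of the final unlabelled output is unchanged. From the data $(\mF, \sigma, ((H_\tau, \pi_\tau))_\tau)$ I would build a canonical $(\cF \circ \cG)^\mu$-symmetry $(S, \rho)$ by attaching to each atom $v \in [k]$ a fresh copy of the atom set of $H_{\tau(v)}$ (where $\tau(v)$ denotes the $\sigma$-cycle through $v$) carrying a copy of the $\cG$-structure, endowing the resulting partition with $\mF$, and letting $\rho$ transport copies along the cycles of $\sigma$ with a single twist by $\pi_\tau$ per cycle, so that $\rho^{|\tau|}$ restricted to any one copy agrees with $\pi_\tau$ under the identification of copies. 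The key combinatorial observation is that each $j$-cycle of $\pi_\tau$ produces precisely one $(ji)$-cycle of $\rho$, which yields the monomial identity
\[
\prod_k z_k^{\rho_k} \;=\; \prod_\tau \prod_j z_{j|\tau|}^{(\pi_\tau)_j}
\]
and is precisely the combinatorial content of the cycle-index composition formula.

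A direct calculation then shows that the enhanced procedure outputs $(S, \rho)$ with weight proportional to $\tfrac{\omega(\mF)}{k!}\prod_\tau \tfrac{\nu(H_\tau)^{|\tau|}}{|H_\tau|!}\, y^{|\tau|\,|H_\tau|}$: the factors $\tilde{\cG}^{\nu^{|\tau|}}(y^{|\tau|})$ appearing in the Step~1 probability cancel against the Step~2 normalising constants, and the overall normalisation matches $Z_{(\cF \circ \cG)^\mu}(y, y^2, \ldots)^{-1}$ thanks to the composition formula $Z_{\cF^\omega \circ \cG^\nu}(y, y^2, \ldots) = Z_{\cF^\omega}(\tilde{\cG}^\nu(y), \tilde{\cG}^{\nu^2}(y^2), \ldots)$. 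Identifying $\tfrac{\omega(\mF)}{k!}\prod_\tau \tfrac{\nu(H_\tau)^{|\tau|}}{|H_\tau|!}$ with $\tfrac{\mu(S)}{|S|!}$ after relabelling the atoms of $S$ onto $[|S|]$, and noting that $y^{\sum_\tau |\tau|\,|H_\tau|} = y^{|S|}$, one recovers exactly the $\mathbb{P}_{Z_{(\cF \circ \cG)^\mu}, (y, y^2, \ldots)}$-mass of $(S, \rho)$; taking orbits completes the proof.

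The main obstacle will be this final relabelling step: the canonical symmetry $(S, \rho)$ is produced on a disjoint-union atom set, whereas the target P\'olya-Boltzmann measure lives on $\bigcup_n \Sym(\cF \circ \cG)[n]$, so one must check that the natural map from enhanced-sampler configurations onto labelled $(\cF \circ \cG)$-symmetries has fibres of the correct cardinality---equivalently, that the construction is compatible with the composition transport $(\cF \circ \cG)[\gamma]$ recalled in Section~\ref{sec:prel}. Once this bookkeeping is in place, everything else is a routine manipulation of cycle index series, essentially identical to the unweighted argument of \cite[Prop.~25]{MR2810913}.
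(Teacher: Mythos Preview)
The paper does not supply its own proof of this lemma: it simply cites \cite[Prop.~25]{MR2810913} for the unweighted case and declares the weighted extension ``straight-forward''. Your proposal reconstructs precisely the argument of that reference---lift to full symmetries via Lemma~\ref{le:relation}, use the combinatorial content of the cycle-index composition formula to identify the resulting law with $\mathbb{P}_{Z_{(\cF\circ\cG)^\mu},(y,y^2,\ldots)}$, then project to orbits---so there is no divergence to discuss.

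One cautionary remark: the sentence ``Identifying $\tfrac{\omega(\mF)}{k!}\prod_\tau \tfrac{\nu(H_\tau)^{|\tau|}}{|H_\tau|!}$ with $\tfrac{\mu(S)}{|S|!}$'' cannot be read as a numerical equality (take for instance $k=2$, $\sigma=\mathrm{id}$, $|H_1|=1$, $|H_2|=2$: the left side carries a factor $1/4$, the right side $1/6$). What is true, and what the cycle-index composition formula encodes, is that the \emph{sum} of the left-hand quantities over all sampler configurations mapping to a fixed unlabelled $(\cF\circ\cG)$-symmetry equals the sum of the right-hand quantities over all labelled representatives of that orbit; equivalently, the correspondence between configurations and labelled $(\cF\circ\cG)$-symmetries on $[n]$ has fibres of exactly the right cardinality. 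You already isolate this fibre-counting as the ``main obstacle'', so there is no genuine gap---but when you write the argument out, make the bookkeeping explicit rather than phrasing it as an identification of two unequal numbers.
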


\subsection{Subexponential sequences}

Subexponential sequences correspond up to tilting and rescaling to subexponential densities of random variables with values in a lattice, and may be put into the more general context of subexponential distributions \cite{MR0348393, MR714482, MR3097424}.

\begin{definition}
	Let $d \ge 1$ be an integer. A power series $g(z) = \sum_{n =0}^\infty g_n z^n$ with non-negative coefficients and radius of convergence $\rho >0$ belongs to the class $\mathscr{S}_d$, if $g_n=0$ whenever $n$ is not divisible by $d$, and
	\begin{align}
	\label{eq:condition}
	\frac{g_n}{g_{n+d}} \sim \rho^d, \qquad \frac{1}{g_n}\sum_{i+j=n}g_ig_j \sim 2 g(\rho) < \infty
	\end{align}
	as $n \equiv 0 \mod d$ becomes large.
\end{definition}

We are going to make use of the following basic properties of subexponential sequences.

\begin{lemma}[{\cite[Theorems 4.8, 4.11, 4.30]{MR3097424}}, \cite{MR772907}]
	\label{le:subexp}
	Let $g(z)$ belong to $\mathscr{S}_d$ with radius of convergence $\rho$.
	\begin{enumerate}
		\item For each $\epsilon>0$ there is an $n_0>0$ such that for all $n \ge n_0$ with $n \equiv 0 \mod d$ and each $k \ge 0$
		\[
		[z^n] g(z)^k \le c(\epsilon) (g(\rho) + \epsilon)^k [z^n] g(z).
		\]
		\item If $f(z)$ is a non-constant power series with non-negative coefficients that is analytic at $\rho$, then $f(g(z))$ belongs to $\mathscr{S}_d$ and
		\[
		[z^n] f(g(z)) \sim f'(g(\rho)) [z^n]g(z), \qquad n \to \infty, \qquad n \equiv 0 \mod d.
		\]
		\item If $a_n = h(n) n^{-\beta} \rho^{-n}$ for some constants $\rho>0$, $ \beta > 1$ and a slowly varying function $h$, then the series $\sum_{n \in d\ndN} a_n z^n$ belongs to the class $\mathscr{S}_d$.
	\end{enumerate}
\end{lemma}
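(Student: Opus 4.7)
These three statements are classical in the theory of subexponential sequences; my plan is to prove each in turn, leaning on the characterization of $\mathscr{S}_d$ built into the definition and on the tools developed in Foss, Korshunov and Zachary \cite{MR3097424}.

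For (i), the strategy is the standard Kesten-type induction on $k$. After normalizing so that $g(\rho)=1$ (replace $g$ by $g/g(\rho)$ and reinstate constants at the end), define
\[
A_k = \sup_{n \ge n_0,\; d \mid n} \frac{[z^n] g(z)^k}{g_n},
\]
where $n_0$ is chosen large enough that both conditions in \eqref{eq:condition} hold with small error. Decomposing the convolution $[z^n] g(z)^{k+1} = \sum_{j=0}^n g_j\, [z^{n-j}] g(z)^k$ into three ranges $j \in [0,T]$, $j \in (T, n-T]$ and $j \in (n-T, n]$, the outer ranges are controlled via the ratio condition---which yields $g_{n-j}/g_n \to \rho^j$ for fixed $j$, so that $\sum_{j \le T} g_j g_{n-j} \le (1 + o_T(1)) g_n$---while the central range is bounded by $A_k$ times the full convolution, hence by $(2 + \delta) A_k g_n$ via the convolution condition. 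Careful bookkeeping produces a recursion of the form $A_{k+1} \le (1 + \epsilon) A_k + C(\epsilon)$, which iterates to $A_k \le c(\epsilon)(1 + \epsilon)^k$; undoing the normalization gives the stated bound.

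For (ii), expand $f(g(z)) = \sum_{k \ge 0} f_k g(z)^k$. Iterating the convolution asymptotic in \eqref{eq:condition} yields $[z^n] g(z)^k \sim k\, g(\rho)^{k-1} g_n$ along $n \in d\ndN$ for each fixed $k$. Combined with the uniform bound from (i) and the analyticity of $f$ at $g(\rho)$---which supplies $\sum_k |f_k| (g(\rho) + \epsilon)^k < \infty$ for small enough $\epsilon$---dominated convergence permits interchanging the limit and the summation:
\[
[z^n] f(g(z)) \sim \sum_{k \ge 0} k f_k\, g(\rho)^{k-1} g_n = f'(g(\rho))\, g_n.
\]
Membership of $f(g(z))$ in $\mathscr{S}_d$ then follows immediately: the ratio condition is inherited from that of $g_n$ via the equivalence above, and the convolution condition reduces to that of $g$ up to the positive constant $f'(g(\rho))$.

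For (iii), set $b_n := a_n \rho^n = h(n) n^{-\beta}$, which is regularly varying of index $-\beta < -1$. The ratio condition $a_n/a_{n+d} \to \rho^d$ is immediate from slow variation of $h$ together with $((n+d)/n)^\beta \to 1$. For the convolution, the classical asymptotic for convolutions of regularly varying sequences of index strictly less than $-1$ (see the exposition in \cite{MR3097424} and the original source \cite{MR772907}) gives $\sum_{i+j=n} b_i b_j \sim 2 b_n \sum_k b_k$ along $n \in d\ndN$; multiplying through by $\rho^{-n}$ and noting that $\sum_k b_k = g(\rho)$ produces the second half of \eqref{eq:condition}. The main obstacle is the inductive step of (i): the splitting must be tight enough to keep the base of the geometric factor arbitrarily close to $g(\rho)$ uniformly in $k$, and the rest of the lemma rests on that uniform control.
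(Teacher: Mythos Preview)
The paper does not give its own proof of this lemma; it is quoted verbatim from Foss--Korshunov--Zachary \cite{MR3097424} and Embrechts--Omey \cite{MR772907}, so there is no in-paper argument to compare against beyond the standard ones in those references. Your sketches for parts (ii) and (iii) reproduce those standard arguments accurately.

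For part (i), however, the bookkeeping you describe does not yield the recursion you claim. You write that the central range $T<j\le n-T$ is ``bounded by $A_k$ times the full convolution, hence by $(2+\delta)A_k g_n$'', while the outer range $j\le T$ is controlled by $\sum_{j\le T} g_j g_{n-j}\le (1+o(1))g_n$. Taken at face value this gives $A_{k+1}\le (2+\delta)A_k + C$, which only produces $A_k=O((2+\delta)^k)$, not $c(\epsilon)(1+\epsilon)^k$. The roles of the ranges are in fact reversed. After normalising to $g(\rho)=1$, the subexponential condition says that the two tails $j\le T$ and $j\ge n-T$ of the plain convolution $\sum_j g_j g_{n-j}$ already account for essentially all of its mass $\sim 2g_n$, so the \emph{central} portion satisfies $\sum_{T<j<n-T} g_j g_{n-j}\le \epsilon g_n$ once $T$ is large enough. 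Hence the central range of $\sum_j g_j\,[z^{n-j}]g^k$ contributes at most $\epsilon A_k g_n$; the range $j\le T$ contributes at most $(1+o(1))A_k g_n$ (you need the factor $A_k$ here, which is absent from your description, since one must first invoke $[z^{n-j}]g^k\le A_k g_{n-j}$); and the range $j\ge n-T$ contributes $(1+o(1))g_n$ uniformly in $k$ because $\sum_{i\le T}[z^i]g(z)^k\,\rho^i\le g(\rho)^k=1$. This is what actually delivers $A_{k+1}\le (1+\epsilon)A_k + C(\epsilon)$. Your closing remark that the splitting must be tight enough to keep the base close to $g(\rho)$ is exactly the right concern, but the mechanism you wrote down would not achieve it.
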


The following criterion will prove to be useful as well.

\begin{lemma}[{\cite[Thm. 4.9]{MR3097424}}]
	\label{le:help}
	Let $f(z)$ belong to $\mathscr{S}_1$ with radius of convergence $\rho$, and $g_1(z), g_2(z)$ be power-series with non-negative coefficients. If
	\[
		\frac{[z^n]g_1(z)}{[z^n]f(z)} \to c_1 \qquad \text{and} \qquad  \frac{[z^n]g_2(z)}{[z^n]f(z)} \to c_2
	\]
	as $n \to \infty$ with $c_1,c_2 \ge 0$, then
	\[
		 \frac{[z^n]g_1(z)g_2(z)}{[z^n]f(z)} \to c_1g_2(\rho) + c_2g_1(\rho).
	\]
	If additionally $c_1g_2(\rho) + c_2g_1(\rho)>0$, then $g_1(z)g_2(z)$ belongs to $\mathscr{S}_1$.
\end{lemma}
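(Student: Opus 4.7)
My plan is to expand the coefficient via convolution, $[z^n] h(z) = \sum_{k=0}^n a_k b_{n-k}$ with $h := g_1 g_2$, $a_k := [z^k] g_1$, $b_k := [z^k] g_2$, and split the sum according to whether $k$ is small, of order $n$, or close to $n$. The key regularity input from $f \in \mathscr{S}_1$ is that $f_n/f_{n+1} \to \rho$ (writing $f_n = [z^n] f$), which telescopes to $f_{n-k}/f_n \to \rho^k$ for each fixed $k$.

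For the boundary contributions, the hypothesis $b_{n-k}/f_{n-k} \to c_2$ combined with $f_{n-k}/f_n \to \rho^k$ yields for each fixed $K$
\[
\lim_{n \to \infty} \frac{1}{f_n} \sum_{k=0}^{K} a_k b_{n-k} = c_2 \sum_{k=0}^{K} a_k \rho^k,
\]
which tends to $c_2 g_1(\rho)$ as $K \to \infty$. A symmetric argument handles $\sum_{k=n-K}^n a_k b_{n-k}$ and contributes $c_1 g_2(\rho)$ in the limit.

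The main obstacle will be showing the middle range $M_K(n) := \sum_{K<k<n-K} a_k b_{n-k}$ is negligible. Once $K$ is large enough, $a_k \le (c_1+1) f_k$ and $b_{n-k} \le (c_2+1) f_{n-k}$, so $M_K(n)$ is bounded by a constant times $\sum_{K<k<n-K} f_k f_{n-k}$. Applying the boundary analysis above to the special case $g_1 = g_2 = f$, $c_1 = c_2 = 1$ shows that the two end-pieces of the convolution $[z^n] f^2 = \sum_{k=0}^n f_k f_{n-k}$ together account for $2 f(\rho) f_n$ in the iterated limit. Since the subexponential definition gives $[z^n] f^2 \sim 2 f(\rho) f_n$, this forces $\sum_{K<k<n-K} f_k f_{n-k} = o(f_n)$ upon first letting $n \to \infty$ and then $K \to \infty$. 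Assembling the three pieces yields $[z^n] h(z) / f_n \to c_1 g_2(\rho) + c_2 g_1(\rho)$.

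For the addendum, assume $c := c_1 g_2(\rho) + c_2 g_1(\rho) > 0$, so $h_n \sim c f_n$. The ratio condition $h_n/h_{n+1} \to \rho$ is immediate from this asymptotic and the corresponding property of $f$. To verify the convolution condition $[z^n] h^2 / h_n \to 2 h(\rho)$, I would apply the already-proven first part of the lemma twice: first to $g_1 \cdot g_1$ and $g_2 \cdot g_2$, yielding $[z^n] g_i^2 / f_n \to 2 c_i g_i(\rho)$, and then to $g_1^2 \cdot g_2^2$, yielding $[z^n] h^2 / f_n \to 2 g_1(\rho) g_2(\rho)(c_1 g_2(\rho) + c_2 g_1(\rho)) = 2 h(\rho) c$. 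Dividing by $h_n \sim c f_n$ completes the verification.
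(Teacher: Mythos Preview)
The paper does not give its own proof of this lemma: it is quoted verbatim as \cite[Thm.~4.9]{MR3097424} and used as a black box. There is therefore nothing in the paper to compare your argument against.

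Your proof is correct and is essentially the standard one for this type of result in the theory of subexponential sequences. The three-piece splitting of the convolution, with the middle range controlled by the defining relation $[z^n]f^2 \sim 2f(\rho)f_n$ applied to $f$ itself, is exactly how such statements are typically established. A couple of minor points worth making explicit in a write-up: the bounds $a_k \le (c_1+1)f_k$ require $f_k>0$, which is only guaranteed for $k$ large (via $f_n/f_{n+1}\to\rho$), so $K$ must be chosen accordingly; and the passage from the iterated limit $\lim_{K\to\infty}\lim_{n\to\infty}$ to the genuine limit in $n$ deserves one sentence of $\varepsilon$-bookkeeping. Your treatment of the addendum, bootstrapping the first assertion to $g_1^2$, $g_2^2$ and then to $h^2=g_1^2 g_2^2$, is clean and correct.
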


\section{Unlabelled Gibbs partitions}
\label{sec:conv}
Let $\cF^\omega$ and $\cG^\nu$ be weighted combinatorial species with $\cG[\emptyset] = \emptyset$, so that the weighted composition
\[
	(\cF \circ \cG)^\mu = \cF^\omega \circ \cG^\nu
\]
is well-defined. Throughout we assume that $[z^k]\tilde{\cF}^\omega(z) >0$ for at least one $k \ge 1$ and that $\widetilde{\cF^\omega \circ \cG^\nu}(z)$ is not a polynomial. For each integer $n \ge 0$ with \[[z^n] \widetilde{\cF^\omega \circ \cG^\nu}(z) > 0\] we may sample a random composite structure 
\[
\mS_n = (\mF_n, (\mG_Q)_{Q \in \pi_n})
\] from the set $\mathscr{U}_n(\cF \circ \cG)$ with probability proportional to its $\mu$-weight. 

We are going to study the asymptotic behaviour of the remainder $\mR_n$ when deleting "the" largest component from $\mS_n$.  More specifically, we pick an arbitrary representative of $\mS_n$ and construct $\mR_n$ as follows. We make a uniform choice of a component $Q_0 \in \pi_n$ having maximal size, and let $\mF_n'$ denote the $\cF'$-object obtained from the $\cF$-object $\mF_n$ by relabeling the $Q_0$ atom of $\mF_n$ to a $*$-placeholder.

Thus \[\mF_n' = \cF[\gamma](\mF_n) \in \cF'[\pi_n \setminus \{Q_0\}]\] for the  bijection $\gamma: \pi_n \to (\pi_n \setminus \{Q_0\}) \cup \{*\}$ with $\gamma(Q_0)=*$ and $\gamma(Q) = Q$ for $Q \ne Q_0$. This yields an unlabelled $\cF' \circ \cG$-object
\[
\mR_n := (\mF_n', (\mG_Q)_{Q \in \pi_n \setminus \{Q_0\}}) \in \mathscr{U}(\cF' \circ \cG).
\]

We let $\rho$ denote the radius of convergence of the ordinary generating series  $\tilde{\cG}^\nu(z)$ and suppose throughout that 
	\begin{align}
	\label{eq:a1}
	Z_{\cF^\omega}( \tilde{\cG}^\nu(\rho) + \epsilon, \tilde{\cG}^{\nu^2}((\rho + \epsilon)^2), \tilde{\cG}^{\nu^3}((\rho + \epsilon)^3), \ldots) < \infty
	\end{align}
	for some $\epsilon >0 $. Let $\mR$ be a random unlabelled $\cF' \circ \cG$-element  that follows a Boltzmann distribution
	\[
	\Pr{\mR = R} = \frac{\mu(R) \rho^{|R|}}{ \widetilde{(\cF')^\omega \circ  \cG^\nu}(\rho)}, \qquad R \in \mathscr{U}(\cF' \circ \cG).
	\]

\begin{theorem}
	\label{te:main}
	If the series $\tilde{\cG}^\nu(z)$ belongs to the class $\mathscr{S}_d$, then
	 \begin{align*}
	 \label{eq:convergence}
	 d_{\textsc{TV}}(\mR_n, \mR) \to 0,  \qquad n\to \infty, \qquad   n \equiv 0 \mod d.
	 \end{align*}
\end{theorem}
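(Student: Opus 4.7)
The strategy is to use the P\'olya-Boltzmann sampler of Lemma~\ref{le:composition} to reformulate the statement as a one-big-jump phenomenon for a sum of independent subexponential random variables. Under condition~\eqref{eq:a1}, $\widetilde{\cF^\omega \circ \cG^\nu}(\rho) < \infty$, so $\mS_n$ has the same law as a $\mathbb{P}_{\widetilde{\cF^\omega \circ \cG^\nu}, \rho}$-distributed sample conditioned on size $n$. Lemma~\ref{le:composition} realises any such sample in two stages: draw an $\cF$-symmetry $(\mF, \sigma)$ from $\mathbb{P}_{Z_{\cF^\omega}, (y_i)_i}$ with $y_i = \tilde{\cG}^{\nu^i}(\rho^i)$, and independently attach to each cycle $\tau$ of $\sigma$ a component $G_\tau \sim \mathbb{P}_{\tilde{\cG}^{\nu^{|\tau|}}, \rho^{|\tau|}}$. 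The total size decomposes as $|\mS| = X_1 + \cdots + X_{N_1} + T$, where $N_1 = \sigma_1$, the $X_j$ are iid copies of $|G_\tau|$ for a fixed point $\tau$, and $T = \sum_{|\tau| \ge 2} |\tau|\,|G_\tau|$ collects the contribution from longer cycles.

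Condition~\eqref{eq:a1} ensures that the generating series of $N_1$ and of $T$ both have radius of convergence strictly greater than $\rho$, so both variables are exponentially concentrated and admit all moments; in particular, for $i \geq 2$ each $\tilde{\cG}^{\nu^i}(z^i)$ is analytic at $\rho$. In contrast, each $X_j$ has generating series $\tilde{\cG}^\nu(z) \in \mathscr{S}_d$ and is subexponential. Applying Lemma~\ref{le:subexp}(2) and iterated Lemma~\ref{le:help} to the identity
\[
	[z^n] \widetilde{\cF^\omega \circ \cG^\nu}(z) = [z^n]\, Z_{\cF^\omega}\bigl(\tilde{\cG}^\nu(z),\, \tilde{\cG}^{\nu^2}(z^2),\, \ldots\bigr)
\]
yields the key asymptotic
\[
	[z^n] \widetilde{\cF^\omega \circ \cG^\nu}(z) \;\sim\; \widetilde{(\cF')^\omega \circ \cG^\nu}(\rho)\,[z^n] \tilde{\cG}^\nu(z),
\]
as $n \equiv 0 \bmod d$ grows, in which the prefactor is exactly $\partial_{z_1} Z_{\cF^\omega}$ evaluated at the base parameters, courtesy of the derivative identity $Z_{(\cF')^\omega} = \partial_{z_1} Z_{\cF^\omega}$. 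Combined with the subexponential ratio property $[z^{n-k}] \tilde{\cG}^\nu(z)/[z^n] \tilde{\cG}^\nu(z) \to \rho^k$ for each fixed $k$, this implements the one-big-jump principle: conditioning on $\{|\mS| = n\}$ forces, with probability tending to one, a unique $X_{j^*}$ to equal $n - L_n$ with $L_n$ stochastically bounded, while the remaining data retains its unconditional joint law.

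The giant $\cG$-component of $\mS_n$ is therefore attached to a single fixed point of $\sigma$; removing that fixed point converts $(\mF, \sigma)$ into a Boltzmann $\cF'$-symmetry, and Lemma~\ref{le:composition} applied to $\cF'$ in place of $\cF$ identifies the residual joint law as the sampler description of $\mR$. Carrying this out at the level of individual $R \in \mathscr{U}(\cF' \circ \cG)$, one obtains $\Pr{\mR_n = R} \to \Pr{\mR = R}$ for every such $R$, and this pointwise convergence upgrades to total variation convergence by Scheff\'e's lemma, both sides being probability distributions on a countable state space. The main obstacle I anticipate is the bookkeeping of unlabelled symmetries: the uniform choice of $Q_0$ among tied maxima has to contribute only negligibly -- which follows from the one-big-jump asymptotics, as ties have vanishing probability -- and the combinatorial factors stemming from automorphisms, the uniform choice of $Q_0$, and the passage to the derived species must be checked to match exactly the Boltzmann parametrisation dictated by $Z_{(\cF')^\omega} = \partial_{z_1} Z_{\cF^\omega}$. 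Working throughout at the level of $\cF$-symmetries via Lemma~\ref{le:relation} and controlling tails uniformly via Lemma~\ref{le:subexp}(1) should render this precise.
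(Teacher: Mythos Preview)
Your proposal is correct and follows essentially the same route as the paper: the P\'olya--Boltzmann sampler of Lemma~\ref{le:composition}, the decomposition of $|\mS|$ into the fixpoint contributions $X_1+\cdots+X_{N_1}$ plus a light-tailed remainder $T$ from longer cycles, the enumerative asymptotic of Lemma~\ref{le:enumerative}, and the identification of the marked-fixpoint $\cF$-symmetry with a Boltzmann $\cF'$-symmetry via $Z_{(\cF')^\omega}=\partial_{z_1}Z_{\cF^\omega}$. The only noticeable difference is in the final packaging: you conclude pointwise convergence $\Pr{\mR_n=R}\to\Pr{\mR=R}$ and invoke Scheff\'e, whereas the paper builds an explicit coupling object $\hat{\mS}_n$ from $\mR$ and verifies $d_{\textsc{TV}}(\mS_n,\hat{\mS}_n)\to 0$ through a uniform estimate on a growing window $M_n$ of configurations; both are standard and equivalent here. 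One caveat: your appeal to ``Lemma~\ref{le:subexp}(2) and iterated Lemma~\ref{le:help}'' for the enumerative asymptotic hides real work, since $Z_{\cF^\omega}$ depends on infinitely many arguments and a term-by-term application needs a summable envelope---this is exactly where Lemma~\ref{le:subexp}(1) and a dominated-convergence argument enter in the paper's proof of Lemma~\ref{le:enumerative}, and you should expect to reproduce that step rather than treat it as immediate.
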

The main challenge for verifying Theorem~\ref{te:main} is that we consider objects up to symmetry. Lemma~\ref{le:sampler} provides a way of sampling $\mS_n$ as a conditioned Boltzmann-distributed composite structure consisting of an $\cF$-symmetry with identical $\cG$-objects dangling from each cycle. The key idea will be that the largest $\cG$-object is likely to correspond to a fixpoint of the symmetry. A similar congelation phenomenon was observed for random labelled composite structures sampled from $(\cF^\omega \circ \cG^\nu)[n]$ with probability proportional to their weight~\cite[Thm.3.1]{Mreplaceme}. Our overall strategy is similar, but treating unlabelled structures is more involved. We require the following enumerative result for the proof of our main theorem. 
\begin{lemma}
	\label{le:enumerative}
	If the series $\tilde{\cG}^\nu(z)$ belongs to the class $\mathscr{S}_d$, then 
	\[
		[z^n] \widetilde{\cF^\omega \circ \cG^\nu}(z) \sim \widetilde{(\cF')^\omega \circ \cG^\nu}(\rho) [z^n] \tilde{\cG}^\nu(z), \qquad n \to \infty, \qquad n \equiv 0 \mod d
	\]
	with
	\[
	\widetilde{(\cF')^\omega \circ \cG^\nu}(\rho) = \left( \frac{\text{d}}{\text{d}{z_1}} Z_{\cF^\omega} \right) (\tilde{\cG}^\nu(\rho), \tilde{\cG}^{\nu^2}(\rho^2), \ldots).
	\]
\end{lemma}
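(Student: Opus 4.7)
My plan is to write $\widetilde{\cF^\omega \circ \cG^\nu}(z)$ as a power series in $g(z) := \tilde{\cG}^\nu(z)$ with ``light-tailed'' coefficient series, apply the subexponential machinery of Lemmas~\ref{le:subexp} and~\ref{le:help} termwise, and conclude by dominated convergence. Combining Lemma~\ref{le:relation} with the composition formula for cycle index sums gives
\[
\widetilde{\cF^\omega \circ \cG^\nu}(z) = Z_{\cF^\omega}\bigl(g(z), \tilde{\cG}^{\nu^2}(z^2), \tilde{\cG}^{\nu^3}(z^3), \ldots\bigr).
\]
Expanding $Z_{\cF^\omega}(z_1, z_2, \ldots) = \sum_{k \ge 0} z_1^k P_k(z_2, z_3, \ldots)$ and setting $R_k(z) := P_k(\tilde{\cG}^{\nu^2}(z^2), \tilde{\cG}^{\nu^3}(z^3), \ldots)$, I rewrite this as $\sum_{k \ge 0} g(z)^k R_k(z)$. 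Hypothesis~(\ref{eq:a1}) ensures that each $R_k$ is analytic in a disk of radius at least $\rho+\epsilon$ and that $\sum_{k} (g(\rho)+\epsilon)^k R_k(\rho+\epsilon) < \infty$.

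For the termwise asymptotic I apply Lemma~\ref{le:subexp}(2) to $w \mapsto w^k$, obtaining $[z^n] g(z)^k \sim k g(\rho)^{k-1} [z^n] g(z)$, and then invoke Lemma~\ref{le:help} with $f = g$, $g_1 = g^k$, $g_2 = R_k$, and $c_2 = 0$ (since $R_k$ has strictly larger radius), yielding
\[
[z^n]\bigl(g(z)^k R_k(z)\bigr) \sim k g(\rho)^{k-1} R_k(\rho)\, [z^n] g(z).
\]
The hard part will be establishing a summable majorant that justifies the interchange of the limit and the sum over $k$. Lemma~\ref{le:subexp}(1) gives uniform control $[z^m] g(z)^k \le c(\delta)(g(\rho)+\delta)^k [z^m] g(z)$ for $m$ beyond some threshold $n_0$; splitting $[z^n](g^k R_k) = \sum_{m} [z^m] g(z)^k \cdot r_{k,n-m}$ at $m = n_0$ reduces the principal piece to bounding the convolution $[z^n](g \cdot R_k)$. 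Using the crude estimate $r_{k,m} \le R_k(\rho+\epsilon)(\rho+\epsilon)^{-m}$, this in turn reduces to bounding $\sum_{m \le n} g_m (\rho+\epsilon)^{m-n}$. Setting $L_n := g_n \rho^n$, the long-tail property $L_{n-d}/L_n \to 1$ (a direct consequence of $g_n/g_{n+d} \to \rho^d$) combined with the geometric ratio $((\rho+\epsilon)/\rho)^d > 1$ shows that this convolution sum is $O(g_n)$; the residual contribution from $m < n_0$ is of smaller exponential order because $(\rho+\epsilon)^{-n}/g_n \to 0$, another consequence of long-tailedness together with the radius-$\rho$ hypothesis. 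Putting the pieces together yields a bound $[z^n](g^k R_k) \le C(g(\rho)+\delta)^k R_k(\rho+\epsilon)\,[z^n] g(z)$ for large $n$ with $C$ independent of $k$, and choosing $\delta < \epsilon$ makes the majorant summable by~(\ref{eq:a1}).

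Dominated convergence then delivers
\[
[z^n]\widetilde{\cF^\omega \circ \cG^\nu}(z) \sim [z^n]g(z) \sum_{k \ge 1} k g(\rho)^{k-1} R_k(\rho) = [z^n]g(z)\cdot\frac{\partial Z_{\cF^\omega}}{\partial z_1}\bigl(\tilde{\cG}^\nu(\rho), \tilde{\cG}^{\nu^2}(\rho^2), \ldots\bigr).
\]
The derivative formula $Z_{(\cF')^\omega} = \partial_{z_1} Z_{\cF^\omega}$, together with the composition formula for cycle index sums applied to $(\cF')^\omega \circ \cG^\nu$ and one further appeal to Lemma~\ref{le:relation}, identifies the trailing factor with $\widetilde{(\cF')^\omega \circ \cG^\nu}(\rho)$, completing the argument.
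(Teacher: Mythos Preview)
Your proof is correct and is essentially the paper's argument recast in generating-function rather than probabilistic language: the paper samples a Boltzmann-distributed composite structure and conditions on the number $f$ of fixpoints of the accompanying $\cF$-automorphism together with the residual size $h$, but the joint generating function of $(f,h)$ is exactly $\sum_k y^k g(\rho)^k R_k(w\rho)/\widetilde{\cF^\omega\circ\cG^\nu}(\rho)$, so your expansion $\sum_k g(z)^k R_k(z)$ is the same decomposition. Both proofs then obtain the termwise asymptotic via Lemmas~\ref{le:subexp} and~\ref{le:help}, build the identical summable majorant from Lemma~\ref{le:subexp}(1) combined with the exponential decay of the coefficients of $R_k$ (equivalently, the finite exponential moments of $(f,h)$), and finish by dominated convergence.
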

If $\tilde{\cG}^\nu(z)$ is amendable to singularity analysis, then Lemma~\ref{le:enumerative} may also be verified using analytic methods \cite{MR2483235}. But we make no assumptions at all about the singularities of $\tilde{\cG}^\nu(z)$ on the circle $|z| = \rho$. We only require that this series belongs to the class $\mathscr{S}_d$, which is much more general.

Clearly Theorem~\ref{te:main} also implies total variational convergence of the number of components, which has been studied in \cite{MR1763972} for the case of weighted multisets where  $\cF^\omega = \Set$ and each $\cF$-object receives weight $1$.

\begin{corollary}
	\label{co:moments}
Suppose that the series $\tilde{\cG}^\nu(z)$ belongs to the class $\mathscr{S}_d$. Let $c(\cdot)$ denote the number of components in a composite structure. Then $c(\mS_n)$ converges towards $1 + c(\mR)$ in total variation.
\end{corollary}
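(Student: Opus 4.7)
The plan is to deduce Corollary~\ref{co:moments} directly from Theorem~\ref{te:main} by observing that the number of components is a measurable function of the remainder, and that total variation distance is non-increasing under measurable maps.

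First I would note the key deterministic identity $c(\mS_n) = 1 + c(\mR_n)$, which holds simply because $\mR_n$ is obtained from (a representative of) $\mS_n$ by removing exactly one component, namely the uniformly chosen largest one $Q_0$. Hence, writing $\varphi: \mathscr{U}(\cF' \circ \cG) \to \ndN_0$ for the (well-defined on isomorphism classes) map that counts components, we have
\[
c(\mS_n) = 1 + \varphi(\mR_n) \qquad \text{and} \qquad 1 + c(\mR) = 1 + \varphi(\mR).
\]

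Next, I would invoke the standard contraction property of total variation distance: for any measurable map $\varphi$ between countable state spaces and any two random variables $X, Y$,
\[
d_{\textsc{TV}}(\varphi(X), \varphi(Y)) \le d_{\textsc{TV}}(X, Y).
\]
Applied to $X = \mR_n$, $Y = \mR$ and the component-counting map $\varphi$, this gives
\[
d_{\textsc{TV}}(\varphi(\mR_n), \varphi(\mR)) \le d_{\textsc{TV}}(\mR_n, \mR) \longrightarrow 0
\]
as $n \to \infty$ along $n \equiv 0 \bmod d$, by Theorem~\ref{te:main}. Adding the deterministic constant $1$ preserves total variation distance, so $d_{\textsc{TV}}(c(\mS_n), 1 + c(\mR)) = d_{\textsc{TV}}(1 + \varphi(\mR_n), 1 + \varphi(\mR)) \to 0$, which is the claim.

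There is no substantial obstacle here beyond checking that the reduction is clean; the entire content is carried by Theorem~\ref{te:main}. The only minor point worth spelling out is that $\varphi$ is well-defined on unlabelled objects (the number of components is invariant under the symmetric group action, since the group permutes labels inside each class rather than merging classes), so that one may legitimately apply the contraction inequality on the countable state space $\mathscr{U}(\cF' \circ \cG)$.
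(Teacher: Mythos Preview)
Your proposal is correct and matches the paper's approach: the paper does not give a separate proof of this corollary but simply notes that it follows directly from Theorem~\ref{te:main}, which is exactly the reduction you carry out via the contraction property of total variation under the component-counting map and the identity $c(\mS_n)=1+c(\mR_n)$.
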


If we condition $\mR_n$ on having a fixed size $k < n/2$, then the $\cG$-object of the largest component gets drawn with probability proportional to its weight from $\mathscr{U}_{n - k}$. And clearly $\mR$ has with probability tending to $1$ size less than $n/2$. Hence we may rephrase Theorem~\ref{te:main} as follows.

\begin{corollary}
 Suppose that the series $\tilde{\cG}^\nu(z)$ belongs to the class $\mathscr{S}_d$.
 If $\mR$ has size less than $n$, let $\hat{\mS}_n$ denote the random unlabelled $\cF\circ \cG$-object constructed by drawing a $\cG$-object $\mG_{n - |\mR|}$ from $\mathscr{U}_{n - |\mR|}$ with probability proportional to its weight. If $\mR \ge n$, set $\hat{\mS}_n$ to some placeholder value $\diamond$. Then
\[
	d_{\textsc{TV}}(\mS_n, \hat{\mS}_n) \to 0, \qquad n \to \infty, \qquad n \equiv 0 \mod d.
\]
\end{corollary}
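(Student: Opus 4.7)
The plan is to reduce the claim to Theorem~\ref{te:main} by observing that both $\mS_n$ and $\hat{\mS}_n$ are reconstructible from a remainder together with a $\cG$-component, and that the two reconstructions use matching conditional distributions once the remainder is small compared to $n$.

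First, I would invoke Theorem~\ref{te:main} to select a maximal coupling $(\tilde{\mR}_n, \tilde{\mR})$ of $\mR_n$ and $\mR$ satisfying $\Pr{\tilde{\mR}_n \ne \tilde{\mR}} \to 0$. Hypothesis~(\ref{eq:a1}) guarantees that $\widetilde{(\cF')^\omega \circ \cG^\nu}(\rho)<\infty$, so $\mR$ is a proper random element with $|\mR|<\infty$ almost surely; in particular the maximum component size of $\mR$ is strictly less than $n-|\mR|$ and $|\mR|<n$ with probability tending to one.

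Next, I would analyze the conditional law of the removed largest component in the construction of $\mR_n$. For any $R \in \mathscr{U}(\cF' \circ \cG)$ whose maximum component has size strictly smaller than $n-|R|$, the assignment $S \mapsto (R,G)$ -- sending an $S \in \mathscr{U}_n(\cF\circ\cG)$ with unique largest component $G$ to the pair obtained by relabelling the atom carrying $G$ in the outer $\cF$-structure as a $*$-placeholder -- is a weight-preserving bijection onto the set of such pairs $(R,G)$ with $G \in \mathscr{U}_{n-|R|}(\cG)$. Since the $\cF'$-weight agrees with the underlying $\cF$-weight, one has $\mu(S) = \mu(R)\nu(G)$, and because $\mS_n$ is sampled proportionally to $\mu$, it follows that the conditional law of the removed component given $\mR_n = R$ is proportional to $\nu$ on $\mathscr{U}_{n-|R|}(\cG)$. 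This coincides exactly with the conditional law of $\mG_{n-|\mR|}$ in the construction of $\hat{\mS}_n$.

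Combining these steps, I would enrich the coupling $(\tilde{\mR}_n, \tilde{\mR})$ by sampling a shared $\cG$-component from the common conditional distribution whenever $\tilde{\mR}_n = \tilde{\mR}$, yielding a joint realisation under which $\mS_n = \hat{\mS}_n$ as soon as the three conditions $\tilde{\mR}_n = \tilde{\mR}$, $|\tilde{\mR}|<n$, and maximum component of $\tilde{\mR}$ less than $n-|\tilde{\mR}|$ hold simultaneously. Bounding $d_{\textsc{TV}}(\mS_n, \hat{\mS}_n)$ by the probability that at least one of these conditions fails then yields the conclusion. The main subtle point, which I expect to require the most care, is the bijective step: the definition of $\mR_n$ involves a uniform random choice among largest components of $\mS_n$, and this choice is incompatible with a clean bijection in general; however, the obstruction evaporates precisely because, under the hypotheses and the coupling, the largest component of $\mS_n$ is unique with probability tending to one.
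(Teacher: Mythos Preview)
Your proposal is correct and mirrors the paper's own justification: the paragraph immediately preceding the Corollary observes that conditioning $\mR_n$ on having size $k<n/2$ forces the deleted component to be the unique largest one and to be drawn with probability proportional to its $\nu$-weight from $\mathscr{U}_{n-k}(\cG)$, which is precisely your bijection-and-coupling argument spelled out with more care (including the point about uniqueness of the maximal component).

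It is worth noting, though, that in the paper's formal development the logical order is reversed: the convergence $d_{\textsc{TV}}(\mS_n,\hat{\mS}_n)\to 0$ is established \emph{directly} as Equation~(\ref{eq:showme}) inside the proof of Theorem~\ref{te:main}, via the P\'olya--Boltzmann sampler of Lemma~\ref{le:sampler} and the asymptotics of Lemma~\ref{le:enumerative}, and Theorem~\ref{te:main} is then deduced from it. So your derivation, while valid once Theorem~\ref{te:main} is in hand, inverts the dependence used in the paper's detailed argument.
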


\section{Proofs}
\label{sec:proofs}

Before starting with the proofs of our main results, we make an  elementary observation.

\begin{lemma}
	\label{le:cyclepgf}
	Let $\cF^\omega$ and $\cG^\nu$ be weighted species with $\cG^\nu[\emptyset] = \emptyset$, and let $(\mS, \sigma)$ be a random symmetry that  follows a $\mathbb{P}_{Z_{\cF^\omega \circ \cG^\nu}, (\rho^j)_j}$-distribution for some $\rho>0$. The composite structure of $\mS$ is of the form $(\mF, (\mG_Q)_{Q \in \pi})$ with $\pi$ a partition of a finite set, $\mF$ an $\cF$-structure on $\pi$, and $G_Q$ a $\cG$-structure on $Q$ for each $Q \in \pi$. As $\sigma$ is an automorphism, it follows that
	\[
	\bar{\sigma}: \pi \to \pi, Q \mapsto \sigma(Q)
	\]
	is well-defined permutation of the collection $\pi$ of partition classes. For each $i \ge 1$ let $X_i$ denote the number of cycles of length $i$ in in the induced permutation $\bar{\sigma}$, $Y_i = \sum_i X_i$ the total number of atoms contained in cycles of length $i$, and $Z_i$ the sum of sizes of all $\cG$-objects corresponding to atoms of $\bar{\sigma}$ that are contained in cycles of length $i$. Then
	\[
		\Exb{ \prod_{i \ge 1} x_i^{X_i}y_i^{Y_i}z_i^{Z_i}} = \frac{Z_{\cF^\omega}(x_1 y_1 \tilde{\cG}^\nu(z_1 \rho), x_2 y_2^2 \tilde{\cG}^{\nu^2}( (z_2 \rho)^2), x_3 y_3^3 \tilde{\cG}^{\nu^3}( (z_3 \rho)^3), \ldots ) }{\widetilde{\cF^\omega \circ \cG^\nu}(\rho)}.
	\]	
\end{lemma}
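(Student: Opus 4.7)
The plan is to compute the joint probability generating function directly by refining the composition formula for cycle index sums. Recall that
\[
Z_{\cF^\omega \circ \cG^\nu}(z_1, z_2, \ldots) = Z_{\cF^\omega}\!\bigl(Z_{\cG^\nu}(z_1,z_2,\ldots),\,Z_{\cG^{\nu^2}}(z_2,z_4,\ldots),\,Z_{\cG^{\nu^3}}(z_3,z_6,\ldots),\ldots\bigr),
\]
and that the combinatorial mechanism behind this identity decomposes a composite symmetry $(\mS,\sigma)$ uniquely as follows: the $\cF$-symmetry $(\mF,\bar\sigma)$ on $\pi$, together with, for every cycle $c$ of $\bar\sigma$ of length $j$, an unlabelled $\cG^{\nu^j}$-symmetry that records the common isomorphism type $\tilde\mG_c$ of the $\cG$-objects $\mG_Q$ on the classes $Q$ traversed by $c$, as well as the residual permutation induced by $\sigma^j$ on a distinguished representative class. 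Substituting $(\rho^j)_j$ for the variables and using $Z_{\cG^{\nu^j}}(\rho^j,\rho^{2j},\ldots) = \tilde\cG^{\nu^j}(\rho^j)$ recovers $\widetilde{\cF^\omega\circ\cG^\nu}(\rho)$ in the denominator.

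Second, I would insert the markers $x_i,y_i,z_i$ at the level of this decomposition. A cycle $c$ of length $j$ in $\bar\sigma$ whose attached unlabelled $\cG$-object $\tilde\mG_c$ has size $k$ contributes $+1$ to $X_j$, $+j$ to $Y_j$ (the $j$ classes traversed), and $+jk$ to $Z_j$ (each of the $j$ classes supports a $\cG$-object of size $k$). Hence replacing in the $j$-th slot of $Z_{\cF^\omega}$ the series $\tilde\cG^{\nu^j}(\rho^j) = \sum_{\tilde G}\nu^j(\tilde G)\rho^{j|\tilde G|}$ by its marked version
\[
x_j\, y_j^{\,j}\sum_{\tilde G\in\mathscr U(\cG)} \nu^j(\tilde G)\,(z_j\rho)^{j|\tilde G|} \;=\; x_j\, y_j^{\,j}\,\tilde\cG^{\nu^j}\!\bigl((z_j\rho)^j\bigr)
\]
yields the numerator of the claimed identity after the outer substitution into $Z_{\cF^\omega}$. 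Dividing by the partition function $\widetilde{\cF^\omega\circ\cG^\nu}(\rho)$ turns the weighted sum into the expectation with respect to $\mathbb{P}_{Z_{\cF^\omega\circ\cG^\nu},(\rho^j)_j}$.

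The only delicate point is the bookkeeping in the second step: one must verify that the combinatorial decomposition of composite symmetries matching the cycle index composition formula is compatible with the definitions of $X_i,Y_i,Z_i$, in particular that the $\bar\sigma$-atoms in a cycle of length $j$ all carry isomorphic $\cG$-objects of a common size that is precisely the $\cG^{\nu^j}$-size contributed in the $j$-th argument of $Z_{\cF^\omega}$. Once this is checked, the identity is a formal consequence of the cycle index composition formula together with Lemma~\ref{le:relation}, with no convergence or analytic issues to address since the assumption on $\widetilde{\cF^\omega\circ\cG^\nu}(\rho)$ being finite is already part of the hypothesis on the parameter $\rho$.
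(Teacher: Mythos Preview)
Your proof sketch is correct and follows essentially the same approach as the paper: the paper likewise treats the lemma as a minor refinement of the standard proof of the composition formula $\widetilde{\cF^\omega \circ \cG^\nu}(z) = Z_{\cF^\omega}(\tilde{\cG}^\nu(z), \tilde{\cG}^{\nu^2}(z^2), \ldots)$, obtained by inserting the additional formal variables $(x_i,y_i,z_i)_{i\ge 1}$ into the combinatorial decomposition of composite symmetries (an $\cF$-symmetry whose cycles of length $i$ each carry $i$ identical copies of a $\cG$-object). Your bookkeeping of the contributions $+1$ to $X_j$, $+j$ to $Y_j$, $+jk$ to $Z_j$ matches exactly the paper's informal explanation of why the $j$-th slot becomes $x_j y_j^{\,j}\tilde\cG^{\nu^j}((z_j\rho)^j)$.
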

Lemma~\ref{le:cyclepgf} is a minor extension of the proof of the well-known enumerative formula
\[
	\widetilde{\cF^\omega \circ \cG^\nu}(z) = Z_{\cF^\omega}(\tilde{\cG}^\nu(z), \tilde{\cG}^{\nu^2}(z^2), \ldots)
\]
given for example in \cite[Theorem 3 and Section 6]{MR633783} or \cite[Proposition 11 of Section 2.3]{MR1629341}. Instead of using a single formal variable $z$ in the proof for counting the total size, all involved counting series may be replaced by versions with additional formal variables $(x_i, y_i, z_i)_{i \ge 1}$, that keep track of the required fine-grained statistics. We do not aim to go through the details. Roughly speaking, the idea behind this is that symmetries of composite $\cF \circ \cG$-structures correspond, up to a certain relabelling and cycle composition process, to an $\cF$-symmetry, where each cycle $\tau$ with length $|\tau|$ gets endowed with $|\tau|$ identical copies of a $\cG$-symmetry. Thus, in the sum
\[
Z_{\cF^\omega}(x_1 y_1 \tilde{\cG}^\nu(z_1 z), x_2 y_2^2 \tilde{\cG}^{\nu^2}( (z_2 z)^2), x_3 y_3^3 \tilde{\cG}^{\nu^3}( (z_3 z)^3), \ldots ),
\]
the variable $z$ keeps track of the total size, the $x_i$ of the number of cycles of length $i$ in the symmetry and consequently the $y_i$ of the total mass of theses cycles. The powers $(z_i z)^i$ are due to the fact each $\cG$-object assigned to a cycle with length $i$ gets counted $i$-times due to the identical copies corresponding to each atom of the cycle.

\begin{proof}[Proof of Lemma~\ref{le:enumerative}]
Throughout, we let $n$ denote an integer that is divisible by $d$.
We assumed that $\cF^\omega$ and $\cG^\nu$ are weighted species such that the ordinary generating function $\tilde{\cG}^\nu(z)$ belongs to $\mathscr{S}_d$. We further assumed by Inequality~\eqref{eq:a1} that
\begin{align}
	\label{eq:assumption}
		Z_{\cF^\omega}( \tilde{\cG}^\nu(\rho) + \epsilon, \tilde{\cG}^{\nu^2}((\rho + \epsilon)^2), \tilde{\cG}^{\nu^3}((\rho + \epsilon)^3), \ldots) < \infty
\end{align}
for some $\epsilon >0$, with $\rho$ denoting the radius of convergence of the series $\tilde{\cG}^\nu(z)$.

We start by constructing a $\mathbb{P}_{\widetilde{\cF^\omega \circ \cG^\nu}, \rho}$-distributed composite structure according to Lemma~\ref{le:sampler}. Let $(\mF, \sigma)$ follow a  $\mathbb{P}_{Z_{\cF^\omega}, (\tilde{\cG}^\nu(\rho),\tilde{\cG}^{\nu^2}(\rho^2), \ldots) }$-distribution. For each cycle $\tau$ of $\sigma$ let $|\tau|$ denote its length and draw a $\cG$-object $G_\tau$ according to a $\mathbb{P}_{\tilde{\cG}^{\nu^{|\tau|}}, \rho^{|\tau|}}$-distribution. We construct the $\cF \circ \cG$-object $\mS$ by assigning for each cycle $\tau$  and each atom $v$ of $\tau$ an identical copy of $G_\tau$ to $v$. Thus $\mS$ corresponds to $(\mF, (\mG_v)_v)$.

Let $f$ denote the number of fixpoints of the permutation $\sigma$, and $\mG_1, \ldots \mG_f$ the corresponding $\cG$-structures. We set $g_i = |\mG_i|$ for all $i$. Let $\mH$ denote the structure obtained from $\mS$ by deleting all $\cG$-objects that correspond to fixpoints of $\sigma$, and let $h$ denote the total size of its remaining $\cG$-objects. Thus
\begin{align}
\label{eq:ff3}
|\mS| = \sum_{i=1}^f g_i + h.
\end{align}
The $(g_i)_i$ are independent, but $f$ and $h$ may very well depend on each other. By Lemma~\ref{le:cyclepgf}, their joint probability generating function is given by
\begin{align}
	\label{eq:gim1}
	\Ex{y^f w^h} = \frac{Z_{\cF^\omega}(y \tilde{\cG}^\nu(\rho),\tilde{\cG}^{\nu^2}(w^2\rho^2), \tilde{\cG}^{\nu^3}(w^3\rho^3), \ldots)}{\widetilde{\cF^\omega \circ \cG^\nu}(\rho)}.
\end{align}
Hence the assumption \eqref{eq:assumption} states precisely that the vector $(f,h)$ has finite exponential moments. We are going to show that
\begin{align}
	\label{eq:hh}
	\Pr{|\mS|=n} \sim \Ex{f} \Pr{g = n}
\end{align}
$g$ denoting the size of a $\mathbb{P}_{\tilde{\cG}^\nu, \rho}$-distributed random $\cG$-object. Since Equation\eqref{eq:gim1} implies that
\[
	\Ex{f} = \frac{\frac{\text{d}}{\text{d}{z_1}} Z_{\cF^\omega}( \tilde{\cG}^\nu(\rho),\tilde{\cG}^{\nu^2}(\rho^2), \tilde{\cG}^{\nu^3}(\rho^3), \ldots)\tilde{\cG}^\nu(\rho)}{\widetilde{\cF^\omega \circ \cG^\nu}(\rho)},
\]
it is clear that Equation~\eqref{eq:hh} is equivalent to 
 	\[
 	[z^n] \widetilde{\cF^\omega \circ \cG^\nu}(z) \sim \widetilde{(\cF')^\omega \circ \cG^\nu}(\rho) [z^n] \tilde{\cG}^\nu(z), \qquad n \to \infty, \qquad n \equiv 0 \mod d.
 	\]
We have thus successfully reduced the task of determining asymptotically the coefficients of $\widetilde{\cF^\omega \circ \cG^\nu}(z)$ to the probabilistic task of verifying \eqref{eq:hh}, and may apply available results for subexponential probability distributions.  Equation~\eqref{eq:ff3} implies that 
\begin{align}
	\label{eq:tmp}
	\Pr{|\mS|=n} = \Prb{\sum_{i=1}^f g_i + h=n} 
		= \sum_{k\ge 0} \Pr{f=k} \Prb{\sum_{i=1}^k g_i + h=n \mid f = k}.
\end{align}
Let $g$ denote a random variable that is distributed like the size of a $\mathbb{P}_{\tilde{\cG}^\nu, y}$-distributed random $\cG$-object.  
Given $f=k$, the $(g_i)_{1 \le i \le k}$ are independent and identically distributed copies of $g$.
Lemma~\ref{le:subexp} implies that for each fixed $k$ it holds that
\[
	\Prb{\sum_{i=1}^k g_i = n \mid f = k} = \Prb{\sum_{i=1}^k g_i =n} \sim k \Pr{g = n}.
\]
As the vector $(f,h)$ has finite exponential moments, it also holds that the conditioned version $(h \mid f = k)$ has finite exponential moments.   It follows from  Lemma~\ref{le:help} that
\[
\Prb{\sum_{i=1}^k g_i + h=n \mid f = k} \sim  k \Pr{g= n}
\]
and hence
\[
\Prb{\sum_{i=1}^k g_i + h=n , f = k} \sim  \Pr{f=k} k \Pr{g= n}.
\]
Consequently, if we can find a summable sequence $(C_k)_{k \ge 0}$ such that
\begin{align}
\label{eq:bound}
\Prb{\sum_{i=1}^k g_i + h=n , f = k} \le C_k \Pr{g=n},
\end{align}
for all $k$, then it follows by dominated convergence  that
\[
\Pr{|\mS|=n} = \sum_{k\ge 0} \Pr{f=k} \Prb{\sum_{i=1}^k g_i + h=n \mid f = k} \sim \Ex{f} \Pr{g=n}.
\]
Thus, in order to show \eqref{eq:hh} it remains to establish Inequality~\eqref{eq:bound}. 
By Lemma~\ref{le:subexp} for each $\epsilon>0$ there is an integer $x_0 = x_0(\epsilon)>0$ and a constant $c(\epsilon)>0$ such that for all integers $x \ge x_0$ and each $k \ge 0$ it holds that
\begin{align}
\label{eq:help}
\Prb{\sum_{i=1}^k g_i = x} \le c(\epsilon)(1+\epsilon)^k \Pr{g = x}.
\end{align}
Clearly we have that
\begin{multline}
	\label{eq:ff7}
	\Prb{\sum_{i=1}^k g_i + h=n, f=k} =  \Prb{\sum_{i=1}^k g_i + h=n, h > n- x_0, f=k} \\ + \Prb{\sum_{i=1}^k g_i + h=n, h \le n- x_0,f=k}.
\end{multline}
Since $h$ has finite exponential moments, there are constants $C,c>0$ such that for all $n$
\[
\Prb{\sum_{i=1}^k g_i + h=n, h > n- x_0, f=k} \le \Pr{h > n - x_0} \le C \exp(-cn).
\]
We know that $g$ is heavy-tailed because it belongs to $\mathscr{S}_d$. Hence it follows that
\begin{align}
\label{eq:firstsummand}
\Prb{\sum_{i=1}^k g_i + h=n, h > n- x_0, f=k} = o(\Pr{g=n})
\end{align}
uniformly for all $k \ge 0$ as $n$ becomes large.
As for the other summand in \eqref{eq:ff7}, it holds that
\begin{multline*}
	\Prb{\sum_{i=1}^k g_i + h=n, h \le n- x_0, f=k}
					\\= \sum_{\ell =0}^{n-x_0} \Pr{h=\ell,f=k} \Prb{\sum_{i=1}^k g_i = n-\ell  }.
\end{multline*}
Since $n-k \ge x_0$, it follows from Inequality~\eqref{eq:help} that for all $\epsilon>0$
\begin{multline}
	\label{eq:last}
	\sum_{\ell =0}^{n-x_0} \Pr{h=\ell,f=k} \Prb{\sum_{i=1}^k g_i = n-\ell  } \\\le c(\epsilon)  \sum_{\ell=0}^{n} \Pr{h=\ell,f=k} (1+ \epsilon)^k \Pr{g =n-\ell}.
\end{multline}
As the vector $(f,h)$ has finite exponential moments, there is a $\delta>0$ such that 
\begin{align}
\label{eq:finite}
\Ex{(1+\delta)^f(1+\delta)^w} < \infty.
\end{align}
Since $\epsilon>0$ was arbitrary, we may choose it small enough such that $0 < \epsilon < \delta$. Thus
\[
\frac{1+\epsilon}{1 + \delta} < 1
\]
and
\begin{align}
\label{eq:second}
&\Prb{\sum_{i=1}^k g_i + h=n, h \le n- x_0, f=k}  \nonumber \\ 
&\qquad\le c(\epsilon) \left( \frac{1+\epsilon}{1 + \delta}\right )^k \sum_{\ell=0}^{n} \Pr{h=\ell,f=k} (1+ \delta)^k \Pr{g =n-\ell} \nonumber \\
&\qquad \le c(\epsilon) \left( \frac{1+\epsilon}{1 + \delta}\right )^k \sum_{\ell = 0}^n p_\ell \Pr{g = n-\ell} 
\end{align}
with
\[
	p_\ell = \sum_{k \ge 0} \Pr{h=\ell, f=k}(1+ \delta)^k
\]
satisfying
\[
	\sum_{\ell \ge 0} p_\ell (1 + \delta)^\ell < \infty
\]
by Inequality~\eqref{eq:finite}. Hence we may apply Lemma~\ref{le:help} to obtain
\[
\sum_{\ell = 0}^n p_\ell \Pr{g = n-\ell} \sim \Pr{g = n}.
\]
So Equation~\eqref{eq:firstsummand} and Inequality~\eqref{eq:second} imply that for all $k \ge 0$
\[
\Prb{\sum_{i=1}^k g_i + h=n, f=k} \le C_k \Pr{g=n}
\]
for a summable sequence $(C_k)_{k \ge 0}$. This verifies Inequality~\eqref{eq:bound} and hence \eqref{eq:hh} follows by dominated convergence.
\end{proof}

\begin{proof}[Proof of Theorem \ref{te:main}]
	We use the same notation as in the proof of Lemma~\ref{le:enumerative}, that is,  we let $\mS$ denote a random $\mathbb{P}_{\widetilde{\cF^\omega \circ \cG^\nu}, \rho}$-distributed composite structure assembled according to Lemma~\ref{le:sampler} as follows: We sample an $\cF$-symmetry $(\mF, \sigma)$ following a $\mathbb{P}_{Z_{\cF^\omega}, (\tilde{\cG}^\nu(\rho),\tilde{\cG}^{\nu^2}(\rho^2), \ldots) }$-distribution and let $f$ denote the number of fixpoints of $\sigma$. We let $(\mG_i)_{i \ge 1}$ denote an independent family of $\mathbb{P}_{\tilde{\cG}^\nu, \rho}$-distributed $\cG$-objects, of which we match the first $f$ to the fixpoints of $\sigma$ in any canonical order. For example, we may order the fixpoints according to their labels in $\{1, \ldots, |\mF|\}$, but any canonical order will do by exchangability of the $\mG_i$.  Likewise, for each cycle $\tau$ of $\sigma$ with length $|\tau| \ge 2$  we draw a $\cG$-object $G_\tau$ according to a $\mathbb{P}_{\tilde{\cG}^{\nu^{|\tau|}}, \rho^{|\tau|}}$-distribution, and assign to each atom of $\tau$ an identical copy of $G_\tau$. We let $\mH$ denote structure obtained from $(\mF, \sigma)$ by only attaching the $\cG$-objects to atoms of cycles with length at least $2$. Then $\mS$ is fully described by the vector
	\[
		(\mH, \mG_1, \ldots, \mG_f).
	\]
	It holds that
	\[
		|\mS| = \sum_{i=1}^f g_i + h.
	\]
	with $h$ denoting the number of atoms of $\mH$ and $g_i = |\mG_i|$ for all $i$.
	As discussed in Subsection~\ref{sec:boltzmann}, the result of conditioning a Boltzmann-distributed object on having a fixed size gets sampled with probability proportional to its weight among all objects of this size. Hence
	\begin{align*}
		\mS_n \eqdist (\mS \mid |\mS| = n).
	\end{align*}

	Similarly, the $\mathbb{P}_{\widetilde{(\cF')^\omega \circ \cG^\nu}, \rho}$-distributed $\cF' \circ \cG$-object $\mR$ may, by virtue of Lemma~\ref{le:sampler}, be sampled analogously as follows: We draw an $\cF$-symmetry $(\mF', \sigma)$ following a $\mathbb{P}_{Z_{(\cF')^\omega}, (\tilde{\cG}^\nu(\rho),\tilde{\cG}^{\nu^2}(\rho^2), \ldots) }$-distribution and let $f'$ denote the number of fixpoints of $\sigma'$. Note that $\sigma$ is a permutation of the non-$*$-atoms of $\mF$, hence we do not count the place-holder atom. We let $(\mG^i)_{i \ge 1}$ denote a list of independent copies of a $\mathbb{P}_{\tilde{\cG}^\nu, \rho}$-distributed $\cG$-object, and match the first $f'$ to the fixpoints of $\sigma'$ in a canonical way. For each cycle $\tau$ of $\sigma'$ with length $|\tau| \ge 2$  we draw a $\cG$-object $G'_\tau$ according to a $\mathbb{P}_{\tilde{\cG}^{\nu^{|\tau|}}, \rho^{|\tau|}}$-distribution, and assign to each atom of $\tau$ an identical copy of $G_\tau$. We let $\mH'$ denote the pruned structure where only atoms of non-fixpoints of $\sigma'$ receive a $\cG$-object. Thus $\mR$ is fully determined by the vector
	\[
		(\mH', \mG^1, \ldots, \mG^{f'}),
	\]
	and we set $g^i = |\mG^i|$ for all $i$ and let $h'$ denote the number of atoms in $\mH'$. If $\mR$ has size less than $n$, we let $\hat{\mS}_n$ denote the result of assigning to the $*$-placeholder atom a random unlabelled $\cG$-structure $\mG^*$ sampled from $\mathscr{U}_{n - |\mR|}(\cG)$ with probability proportional to its weight. If $\mR \ge n$, we let $\hat{\mS}_n$ assume some placeholder value $\hat{\mS}_n = \diamond$. We are going to show that
	\begin{align}
		\label{eq:showme}
		d_{\textsc{TV}}(\mS_n, \hat{\mS}_n) \to 0, \qquad n \to \infty, \qquad n \equiv 0 \mod d.
	\end{align}
	If $\mR < n/2$, then $\mG^*$ is the largest $\cG$-object of $\hat{\mS}_n$. Since $\mR$ is almost surely finite, this event takes place with probability tending to $1$ as $n$ becomes large. Hence \eqref{eq:showme} implies that
	\begin{align*}
		d_{\textsc{TV}}(\mR_n, \mR) \to 0.
	\end{align*}
	Thus verifying \eqref{eq:showme} is sufficient to conclude the proof. 
	
	If we interpret $\mF'$ as an $\cF$-object $\mF'_*$ (rather than an $\cF'$-object), then the permutation $\sigma'$ extends to an $\cF$-automorphism $\sigma'_*$ of $\mF'_*$ such that the $*$-vertex is a fixpoint.  The distributions of $(\mF, \sigma)$ and $(\mF'_*, \sigma'_*)$ differ in the fact that $\sigma'$ always has at least one fixpoint, and that the probability to assume a fixed size is different. However, given integers $m, k \ge 1$  it holds that up to relabelling
	\begin{align}
		\label{eq:invariant}
		((\mF, \sigma) \mid f = k, |\mF| = m) \eqdist ((\mF'_*, \sigma'_*) \mid f' = k-1, |\mF'| = m-1).
	\end{align}
	This may be verified as follows. The left-hand side gets drawn with probability proportional to its weight from the subset $A_k \subset \Sym(\cF)[m]$ of all symmetries with $k \ge 1$ fixpoints. Likewise, the right-hand side gets drawn with probability proportional to its weight from the subset $B_k \subset \Sym(\cF')[m-1]$ of symmetries with $k$ fixpoints in total (counting the $*$-atom).  There is a weight-preserving bijection between $\Sym(\cF')[m-1]$ and the symmetries in $\Sym(\cF)[m]$ where the atom $m$ is a fixpoint. It follows that there is a weight-preserving $1$ to $m$ correspondence between  $\Sym(\cF')[m-1]$ and the set of symmetries in $\Sym(\cF)[m]$ with a distinguished fixpoint. Now, for each symmetry in $A_k$ there are precisely $k$ ways to distinguish a fixpoint, hence there is a weight-preserving $1$ to $km$ relation between $A_k$ and $B_k$. Thus \eqref{eq:invariant} holds.
	
	Let $x_1, \ldots, x_k \ge 1$ and $r \ge 0$ be given with
	\[
	x_1 + \ldots + x_k + r = n.
	\]
	It follows from \eqref{eq:invariant} and the construction of $\mH$ and $\mH'$, that 
	\[
		(\mH \mid f=k, h= r) \eqdist (\mH' \mid f' = k-1, h'= r).
	\]
	If we condition the left-hand side additionally on $g_i = x_i$ for all $1 \le i \le k$, then the distribution of $\mH$ does not change and $\mG_i$ gets drawn from $\tilde{\cG}[x_i]$ with probability proportional to its $\nu$-weight. Likewise, if we condition the right-hand side additionally on $g^i=x_i$ for all $1 \le i \le k-1$, then for each $i$ it holds that $\mG^i$ gets drawn from $\tilde{\cG}[x_i]$ with probability proportional to its weight, and $\mG^*$ gets drawn with probability proportional to its weight among all unlabelled $\cG$-objects with $n - r - x_1 - \ldots -x_{k-1} = x_k$ atoms.  Thus
	\begin{multline}
		\label{eq:established}
		(\mS \mid f=k, h=r, g_1=x_1, \ldots, g_k = x_k) \\ \eqdist (\hat{\mS}_n \mid f'=k-1, h'=r, g^1=x_1, \ldots, g^{k-1} = x_{k-1}).
	\end{multline}
		We let $g$ denote a random variable that is distributed like the size of a random $\cG$-object with a $\mathbb{P}_{\tilde{\cG}^\nu, \rho}$ distribution. Since $\tilde{\cG}^\nu(z)$ belongs to $\mathscr{S}_d$, it holds that
		\[
		\Pr{g = n+d} \sim \Pr{g = n}, \qquad n \to \infty.
		\]
		This implies that there is a sequence $t_n$ of non-negative integers such that $t_n \to \infty$ and
		\begin{align}
		\label{eq:admissible}
		\lim_{n \to \infty} \sup_{\substack{0 \le y \le t_n \\ y  \equiv 0 \mod d}}  | \Pr{g = n + y}/\Pr{g = n} -1| = 0.
		\end{align}
		Without loss of generality we may assume that $t_n < n/2$ for all $n$. 
		For any sequence $\mathbf{y} = (y_1, \ldots, y_{k-1})$ of positive integers  we set \[D(\mathbf{y}) := y_1 + \ldots, y_{k-1}. \]  For each integer $m$ with $m > D(\mathbf{y})$ we also set
		\[
		\sigma_m(\mathbf{y}) := \{ (y_1, \ldots, y_{j-1}, m - D(\mathbf{y}), y_j, \ldots, y_k) \mid 1 \le j \le k\}.
		\]
		Finally, we set
		\[
		M_n := \{ (k, r, \mathbf{y}) \mid k \ge 1, r \ge 0, \mathbf{y} \in \ndN^{k-1}, r + D(\mathbf{y}) \le t_n \}.
		\]
		We will show that as $n$ becomes large, it holds uniformly for all $(k,r, \mathbf{y}) \in M_n$ that
		\begin{multline}
			\label{eq:showmenow}
			\Pr{ f=k, h=r, (g_1, \ldots, g_k) \in \sigma_{n-r}(\mathbf{y}) \mid g_1 + \ldots + g_f + h = n} \\ \sim \Pr{ f'=k-1, h' =r, (g^1, \ldots, g^{k-1}) = \mathbf{y} }.
		\end{multline}
		For $D(\mathbf{y}) + r \le  t_n < n/2$, the $(g_1, \ldots, g_k) \in \sigma_n(\mathbf{y})$ corresponds to $k$ distinct outcomes, depending on the unique location for the maximum of the $g_i$. Thus the  left-hand side in \eqref{eq:showmenow} divided by the right-hand side  equals
		\[
			\frac{k \Pr{f=k,h=r} \Pr{g=n - D(\mathbf{y})-r } }{\Pr{f'=k-1,h'=r} \Pr{g_1 + \ldots + g_f +h = n}}.
		\]
		Note that
		\[
			\frac{k \Pr{f=k,h=r}}{\Pr{f'=k-1,h'=r}} = \frac{\tilde{\cG}^\nu(\rho) \widetilde{(\cF')^\omega \circ \cG^\nu}(\rho)}{\widetilde{\cF^\omega \circ \cG^\nu}(\rho)} = \Ex{f}.
		\]
		By Lemma~\ref{le:enumerative} it holds that
		\[
			\Pr{g_1 + \ldots + g_f +h = n} \sim \Ex{f} \Pr{g = n}.
		\]
		Equation~\eqref{eq:admissible} and $D(\mathbf{y}) + r \le  t_n$ yield that 
		\[
		\Pr{g=n - D(\mathbf{y})-r } \sim \Pr{g=n}
		\]
		uniformly for $(k,r,\mathbf{y}) \in M_n$. This verifies the asymptotic equality in \eqref{eq:showmenow}.
	
		As $t_n \to \infty$, it clearly holds that
		\[
			(f'+1,r',(g^1, \ldots, g^{f'})) \in M_n
		\]
		with probability tending to $1$ as $n$ becomes large. Hence it follows from \eqref{eq:showmenow} that
		\[
			\Pr{ (f,h, (g_1, \ldots, g_k)) \in \{ \{(k,r)\} \times \sigma_{n-r}(\mathbf{y}) \mid (k,r, \mathbf{y}) \in M_n \}} \to 1
		\]
		as $n$ becomes large. Thus, we have that uniformly for all sets $\cE$ of $n$-sized unlabelled $\cF \circ \cG$-objects
		\begin{align*}
		\Pr{\mS_n \in \cE} &= \Pr{ \mS \in \cE \mid g_1 + \ldots g_f +h = n} \\ 
		&= o(1) + \sum_{ (k,r,\mathbf{y}) \in M_n} \frac{\Pr{ \mS \in \cE, (f,h)=(k,r),  (g_1, \ldots, g_k) \in \sigma_n(\mathbf{y})}}{\Pr{ g_1 + \ldots g_f + h = n}}.
		\end{align*}
		The summand for $(k, r, \mathbf{y})$ may be expressed by the product
		\begin{multline*}
			\Pr{ \mS \in \cE \mid (f,h)=(k,r), (g_1, \ldots, g_k) \in \sigma_n(\mathbf{y})} \\\Pr{(f,h)=(k,r), (g_1, \ldots, g_k)\in \sigma_n(\mathbf{y}) \mid g_1 + \ldots g_f +h = n}.
		\end{multline*}
		Equation~\eqref{eq:established} yields that the first factor is equal to
		\[
		\Pr{ \hat{\mS}_n \in \cE \mid f' = k-1, h' = r, (g^1, \ldots, g^{k-1}) = \mathbf{y}}.
		\]
		By Equation~\eqref{eq:showmenow}, the second factor is asymptotically equivalent to
		\[
			\Pr{ f'=k-1, h' =r, (g^1, \ldots, g^{k-1}) = \mathbf{y} }
		\]
		uniformly for all $(k,r,\mathbf{y}) \in M_n$ as $n$ becomes large. Thus
		\begin{align*}
			\Pr{\mS_n \in \cE} &= o(1) + \sum_{(k,r,\mathbf{y}) \in M_n} \Pr{\hat{\mS}_n \in \cE, (f',h', (g^i)_i)=(k-1, r, \mathbf{y})} \\
			&= o(1) + \Pr{\hat{\mS}_n \in \cE}.
		\end{align*}
		This completes the proof.
\end{proof}

\bibliographystyle{siam}
\bibliography{unlgibbs}

\end{document}